\newcommand{\R}{{\mathbf{R}}}
\newcommand{\T}{{\mathbf{T}}}
\newcommand{\SO}{\mathop{\! \, \rm SO }\nolimits}
\newcommand{\SE}{\mathop{\! \, \rm SE }\nolimits}
\newcommand{\se}{\mathop{\! \, \rm se }\nolimits}
\newcommand{\tr}{\mathop{\! \, \rm tr }\nolimits}
\newcommand{\sn}{\mathop{\! \, \rm sn }\nolimits}
\newcommand{\cn}{\mathop{\! \, \rm cn }\nolimits}
\newcommand{\dn}{\mathop{\! \, \rm dn }\nolimits}
\newcommand{\lefthook}{\mbox{$\, \rule{8pt}{.5pt}\rule{.5pt}{6pt}\, \, $}}
\theoremstyle{plain}
\newtheorem{claim}{\sc Claim}[section]
\newtheorem{corollary}[claim]{\sc Corollary}
\newtheorem{proposition}[claim]{\sc Proposition}
\newtheorem{theorem}[claim]{\sc Theorem}
\theoremstyle{definition}
\newtheorem{definition}[claim]{\sc Definition}
\newtheorem{example}[claim]{\sc Example}
\theoremstyle{remark}
\begin{document}

\title[Not-quite-Hamiltonian]{Not-quite-Hamiltonian reduction}
\author[L. Bates and J. \'Sniatycki]{Larry M. Bates and J\k{e}drzej \'Sniatycki}

\date{\today }

\begin{abstract}
The not-quite-Hamiltonian theory of singular reduction and
reconstruction is described.  This includes the notions of both
regular and collective Hamiltonian reduction and reconstruction.
\end{abstract}

\maketitle

\section{Introduction}
\noindent Ever since the beginning of analytical mechanics, there has been an 
effort to understand how to reduce the equations of motion given the presence 
of constraints of one form or another.  For example, the motion of a particle 
that is described by Lagrangian or Hamiltonian formalism that is constrained to 
move on a submanifold of configuration space will, by employing 
D'Alembert's principle, give a reduced system of equations of the same form.  A 
much more sophisticated, and perhaps the most striking early example of such 
considerations was Jacobi's elimination of the node in the three body problem.  
Such examples showed that there was a relation between symmetry and conservation 
laws, and these were explained for variational problems by Noether in her 
important work \cite{noether}.  Somewhat dual to this, because of the 
Hamilton-Jacobi theory, there was an evolving understanding of the nature of 
symmetry and conservation laws on the Hamiltonian side, especially in 
understanding non-abelian symmetry groups and the reduction of Hamilton's 
equations.  The first serious counterpart to Noether's theorem on the 
Hamiltonian side was the paper of Meyer \cite{meyer}.\footnote{Such a 
judgement call is always to some extent a question of taste.  The reader may 
have some sympathy for our point of view after rereading the earlier work of 
Arnol'd \cite{arnold66} and Smale \cite{smale}.} In this 
paper, Meyer showed that the free and proper Hamiltonian action of a connected 
Lie group on a symplectic manifold led to a reduced symplectic 
manifold and the reduced dynamics was Hamiltonian.  The importance of this 
theorem is the realization that the structure of the equations of motion has 
been reproduced under reduction by symmetry.  This is a recurrent theme in 
further work. This work was followed by a torrent of papers on 
reduction, all with somewhat different emphases.  For example, some, such as 
Marsden and Weinstein \cite{marsden-weinstein}, stressed the role of the 
momentum map, while others, such as Churchill, Kummer and Rod 
\cite{churchill-kummer-rod}, looked at the relations of symmetry to averaging.  
During the 1980s and 1990s there was a growing awareness of the need to include 
singularity and the desire to discuss dynamics on the reduced space.  A key 
observation in this time was that the dynamics on the reduced space could be 
described by the Poisson bracket on the invariant functions.  Some of the 
notable works using this idea were those of Gotay and Bos 
\cite{gotay-bos}, 
Arms, Cushman and Gotay \cite{arms-cushman-gotay}, and Sjamaar and Lerman 
\cite{sjamaar-lerman}.  At this point it had become clear that the reduced space 
had dynamics, and that it could be described stratum by stratum using the 
Poisson bracket.\footnote{It is our contention that this is about as far as the 
theory can be developed without the notion of differential spaces.  The results 
of this stage of the development of singular reduction are completely described 
in the monograph of Ratiu and Ortega \cite{ratiu-ortega}.}  

Since then, it is now known that the reduced space 
is not only a topological space, but also has a differential structure, which 
is 
completely described by an algebra of smooth functions.  These smooth functions 
are push-forwards of functions on the original space that are invariant 
under the 
group action.    Such singular spaces are described naturally by the theory of 
subcartesian differential spaces, and in the case under consideration, {\it 
 support dynamics as well\,} because the algebra of smooth functions has a 
Poisson structure.  It is our view that satisfying the dual requirements of 
describing  the analytic structure of the singularities of the reduced space 
{\it and}  defining the reduced dynamics provides a powerful justification for 
our use of differential spaces.  

A related development in the theory of constrained Hamiltonian systems with 
symmetry 
 has been the reduction of non-holonomic constraints. The 
regular theory for transverse linear constraints was considered by Koiller 
\cite{koiller}, and extended to the nontransverse case by Bates and \'Sniatycki 
\cite{bates-sniatycki93}.  Regular reduction of nonlinear non-holonomic 
constraints was given by de Leon and de Diego in \cite{deleon97}, and singular 
examples involving linear constraints were considered by Bates in 
\cite{bates98}.  
The singular reduction of nonlinear nonholonomic constraints was 
given by Bates and Nester in \cite{bates-nester}.  What is notable here is 
that the 
formulation is once again in terms of invariant functions and the Poisson 
bracket, the wrinkle being that the Hamiltonian operator need no longer be an 
invariant function, and so the reduced dynamics is given by an {\it outer} 
Poisson morphism.   

Of course, constraints in mechanics do not have to have anything to do with 
symmetry.  There is a less mature, but somewhat parallel stream of development 
that tries to understand the nature of the constraints that show up in systems 
where the Lagrangian is degenerate in the sense that the Legendre 
transformation does not define a local diffeomorphism.  This theory, 
inaugurated by Dirac in \cite{dirac50}, (giving what is now 
called the Dirac constraint algorithm), describes a way to produce a 
Hamiltonian on a submanifold of the phase space.  The constraint algorithm has 
been geometrized by Gotay, Nester and Hinds \cite{gotay-nester-hinds78} and 
Lusanna \cite{lusanna}.  However, the nature of such constraints in the 
Lagrangian is such that the initial data set, which is the subset of the 
original space on which the Lagrangian is defined actually has local solutions 
of the Euler-Lagrange equations, can be a singular space.  Our experience is 
that the best way to deal with such constraints and their singularities, as well 
as the related constructions of reduced spaces, first class functions, etc., is 
to employ the theory of differential spaces \cite{bates-sniatycki2013}.

This note generalizes the singular reduction and reconstruction of a 
Hamiltonian 
 dynamical system to the case in which the Hamiltonian is not 
necessarily invariant under 
the proper Hamiltonian action of a connected Lie group on a symplectic 
manifold, but 
nevertheless still manages to have reduced dynamics.  Consistent 
with the previous cases, the singular reduced dynamics is given 
in terms of a Poisson bracket on the invariant functions. The 
main difference in the not-quite Hamiltonian case with the singular Hamiltonian 
case is that the reduced dynamics is not given by the Poisson bracket of an 
invariant function with an invariant Hamiltonian, as now the bracket of the 
Hamiltonian with an invariant function is an outer Poisson morphism on the 
invariant functions.  Furthermore, in a manner 
similar to Hamiltonian reconstruction, integration of the Hamiltonian dynamics 
is given by integration of an equation on the dual of the Lie algebra, after 
which the original dynamics is reconstructed from the reduced dynamics via 
integration with respect to a moving isotropy subgroup of the original group.

\section{Preliminaries}
Denote a Hamiltonian system by $(P,\omega, h)$.  Here $P$ is the phase space, 
$\omega$ the symplectic form, and $h$ is the Hamiltonian. The Hamiltonian 
vector field $X_h$ satisfies Hamilton's equations $X_h\lefthook\omega=dh$.
Denote by $G$ a connected Lie group, and by $\phi$ its action on $P$.  A 
blanket assumption in this paper is that the action $\phi$ is proper and 
Hamiltonian.  Denote the momentum map for the action $\phi$ by $j:P\rightarrow 
\mathfrak{g}^*$.

The quotient space $\bar{P}:=P/G$, the space of $G$-orbits, is given 
the quotient topology.  Because the action of the group $G$ 
is proper, $\bar{P}$ has a much richer structure than merely that of a 
topological space.  In fact, $\bar{P}$ is known to be a {\it 
stratified subcartesion differential space} (see 
\cite{cushman-bates2015} or \cite{sniatycki13}).  In 
particular, this means that the ring of continuous functions on 
$\bar{P}$, 
denoted $C^{\infty}(\bar{P})$ (declared to be the smooth functions), which are 
push-forwards of smooth $G$-invariant functions on $P$, satisfy the 
conditions  
\begin{enumerate}
  \item The family 
  \[
    \{f^{-1}(I)\,|\, f\in C^{\infty}(\bar{P})\text{ and }I\text{ is an 
open interval in }\R \}
  \]
  is a subbasis for the topology of $\bar{P}$.
  \item If $f_1,\dots,f_n\in C^{\infty}(\bar{P})$ and $F\in C^{\infty}(\R^n)$, 
then $F(f_1,\dots,f_n)\in C^{\infty}(\bar{P})$.
  \item If $f:\bar{P}\rightarrow \R$ is a function such that for each 
$p\in\bar{P}$, there is an open neighbourhood $U$ of $p$ and a function 
$f_p\in C^{\infty}(\bar{P}) $ satisfying $f_p|_U=f|_U$, then $f\in 
C^{\infty}(\bar{P})$.
\end{enumerate}
$\bar{P}$ is {\it subcartesian} means that it is Hausdorff and 
each point $p\in \bar{P}$ has a 
neighbourhood $U$ diffeomorphic to a subset $V$ of $\R^n$.  The stratification 
of $\bar{P}$ is given by orbit type.  Since the many technical details in 
the proof of this would lead us too far astray, we refer the reader to the 
discussions in \cite{bierstone}, \cite{cushman-bates2015} or \cite{sniatycki13}. 
The reader should also 
note that because the group action is Hamiltonian, the stratification of the 
quotient space $\bar{P}$ is determined by the Poisson bracket on the invariant 
functions.  However, we state below definitions and results that are essential 
for this paper.
\begin{definition}
  A differential space $M$ is a topological space endowed with the ring 
$C^{\infty}(M)$ of continuous functions that satisfy the three conditions above.
\end{definition}

\begin{definition}
  A map $\psi: M\rightarrow N$ between differential spaces $M$ and $N$ is 
{\it smooth} if $\psi^*f\in C^{\infty}(M)$ for $f\in C^{\infty}(N)$. A smooth 
map between differential spaces is a {\it local diffeomorphism} if it is a 
local homeomorphism with a smooth inverse.
\end{definition}

\begin{theorem}  For every derivation $X$  of the ring of smooth 
functions on a subcartesian differential space $M$, and each point $p \in M$, 
there exists a unique maximal integral curve of $X$ through $p$.
\end{theorem}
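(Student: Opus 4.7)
The plan is to reduce the problem to the Picard–Lindel\"of theorem on Euclidean space by means of the subcartesian embedding, and then argue that the resulting ambient trajectory stays inside the singular subset. First I would fix $p\in M$ and use the subcartesian hypothesis to pick a neighbourhood $U$ of $p$ diffeomorphic (via some $\varphi$) to a subset $V\subset\R^n$; this transports $X$ to a derivation $X_V$ of $C^\infty(V)$. By property (3) for the structure sheaf of a subcartesian space, the elements of $C^\infty(V)$ are locally restrictions of $C^\infty$-functions on an open neighbourhood $W$ of $\varphi(p)$ in $\R^n$, so I can choose ambient smooth extensions $a_i\in C^\infty(W)$ of $X_V(x_i|_V)$ and assemble the candidate ambient field
\[
  \widetilde X \;=\; \sum_{i=1}^{n} a_i(x)\,\frac{\partial}{\partial x_i}.
\]

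Next I would apply the classical existence–uniqueness theorem for ODEs on $\R^n$ to obtain a smooth integral curve $\widetilde c\colon(-\varepsilon,\varepsilon)\to W$ of $\widetilde X$ with $\widetilde c(0)=\varphi(p)$, and propose $c(t):=\varphi^{-1}(\widetilde c(t))$ as the sought-for integral curve of $X$ on $U$. Provided $\widetilde c(t)\in V$ for all $t$, the relation $\tfrac{d}{dt}(f\circ c)(t)=(Xf)(c(t))$ holds for every $f\in C^\infty(M)$ by the way $\widetilde X$ was defined on coordinate functions.

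The crux, and the step I expect to be hardest, is verifying that $\widetilde c(t)$ actually remains in $V$. This requires showing that $\widetilde X$ is tangent to $V$, in the sense that $(\widetilde X f)(q)=0$ for every ambient $f\in C^\infty(W)$ vanishing on $V\cap W$ and every $q\in V$; equivalently, that $\widetilde X$ preserves the ideal $\mathcal I_V\subset C^\infty(W)$ of functions vanishing on $V$. Because $X_V$ descends as a derivation on the quotient algebra $C^\infty(W)/\mathcal I_V\cong C^\infty(V\cap W)$, one can, after modifying the coefficients $a_i$ by elements of $\mathcal I_V$ if necessary, arrange that $\widetilde X\,\mathcal I_V\subset\mathcal I_V$. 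A Nagumo-type invariance argument, using that $V$ is locally closed in $\R^n$ because $M$ is Hausdorff, then forces the ambient trajectory starting in $V$ to stay in $V$. The subtlety is that $V$ is not a submanifold, so the classical ``tangent vector field implies invariant submanifold'' picture must be replaced by its stratified analogue, compatible with the orbit-type decomposition alluded to above.

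Finally, uniqueness and maximality. Uniqueness of $c$ near $p$ is inherited directly from uniqueness of solutions to the ambient ODE for $\widetilde X$, since any integral curve of $X$ through $p$ pushes forward under $\varphi$ to an integral curve of $\widetilde X$ through $\varphi(p)$. A standard connectedness argument on the common domain of any two integral curves of $X$ upgrades this to uniqueness in the large. The maximal integral curve is then produced in the usual way as the union of all integral curves defined on open intervals containing $0$, well defined precisely because of this uniqueness.
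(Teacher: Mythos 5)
Your overall strategy --- pass to a subcartesian chart $V\subset\R^n$, extend the components $X(x_i|_V)$ to an ambient smooth vector field $\widetilde X$, invoke Picard--Lindel\"of, and pull the solution back --- is exactly the strategy of the proof the paper cites from \'Sniatycki's book, and your treatment of uniqueness (any integral curve of $X$ pushes forward, via the coordinate functions, to a solution of the ambient ODE, hence coincides with $\widetilde c$) is the right mechanism. But the step you single out as the crux is a wrong turn: the theorem does not assert, and it is false in general, that the ambient trajectory remains in $V$. For a mere derivation of $C^\infty(M)$ the maximal integral curve is allowed to have a domain that is a closed, half-closed, or even degenerate interval $\{0\}$. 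Take $M=[0,\infty)\subset\R$ and $X=-\partial_x$, a perfectly good derivation of $C^\infty([0,\infty))$: the ambient trajectory through $p=0$ is $\widetilde c(t)=-t$, which leaves $M$ instantly for $t>0$, and the unique maximal integral curve has domain $(-\infty,0]$. The same example defeats your proposed mechanism: any smooth $f$ vanishing on $[0,\infty)$ has $f'$ vanishing there as well, so $-\partial_x$ preserves the ideal $\mathcal{I}_{[0,\infty)}$, yet $[0,\infty)$ is not invariant under its flow; preservation of the vanishing ideal is strictly weaker than the tangent-cone hypothesis that a Nagumo-type invariance theorem actually requires. (The derivations for which the local model \emph{is} invariant are precisely the ``vector fields'' of Definition 2.4, a strictly smaller class; the appeal to an orbit-type stratification is also out of place, since the theorem concerns arbitrary subcartesian spaces, not just orbit spaces.)

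The repair is to drop the invariance claim: define $c$ as the restriction of $\widetilde c$ to the largest interval $J\ni 0$ with $\widetilde c(J)\subset V$ (possibly $J=\{0\}$), and check the integral-curve identity $\tfrac{d}{dt}(f\circ c)=(Xf)\circ c$ on $J$, with one-sided derivatives at endpoints. This uses the chain rule $X(F|_V)=\sum_i(\partial_i F)|_V\,X(x_i|_V)$ for derivations of a differential structure, which you invoke implicitly for coordinate functions but must state for general $f=F|_V\in C^\infty(V)$. Uniqueness and maximality then go through essentially as you describe, with the extra bookkeeping of gluing across charts and handling the endpoint and degenerate cases.
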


\begin{proof}
The proof may be found in \cite{sniatycki13}.
\end{proof}

For $p \in M$ and $t$ in the domain of the unique integral curve of $X$ through 
$p$, denote by $(\exp tX)p$ the point on the  integral curve of $X$ through 
$p$ corresponding to $t$. This gives a local one-parameter group $\exp tX$ of 
local transformations of $M$.
\begin{definition} A derivation $X$ of $C^{\infty}(M)$ is a {\it vector field} 
on $M$ if $\exp tX$ is a local one-parameter group of local diffeomorphisms of 
$M$.
\end{definition}
\begin{theorem} 
  Orbits of a family of vector fields on $M$ are 
smooth manifolds immersed in $M$.
\end{theorem}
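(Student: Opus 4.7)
The plan is to adapt Sussmann's orbit theorem to the subcartesian setting. Let $\mathcal{F}$ denote the given family of vector fields and, for $p\in M$, define the orbit $O_p$ to be the set of points of the form $(\exp t_k X_k)\circ\cdots\circ(\exp t_1 X_1)(p)$ with $X_i\in\mathcal{F}$ and admissible $t_i$. Because every $\exp tX$ is a local diffeomorphism of $M$ by the definition of vector field adopted in the excerpt, this is an equivalence relation whose classes are the orbits, so it suffices to endow a single $O_p$ with a smooth manifold structure and verify that the inclusion $O_p\hookrightarrow M$ is a smooth immersion.

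The first step is to build local models. Fix $q\in O_p$ and, using subcartesianness, a diffeomorphism $\iota:U\to V\subset\R^n$ from an open neighbourhood $U$ of $q$ onto some $V$. For any tuple $(X_1,\ldots,X_k)$ in $\mathcal{F}$ the evaluation map
\[
\Phi(t_1,\ldots,t_k)=(\exp t_k X_k)\circ\cdots\circ(\exp t_1 X_1)(q)
\]
is smooth between differential spaces, so $\iota\circ\Phi$ is an ordinary smooth $\R^n$-valued map of an open set in $\R^k$ with a well-defined rank. Choose the tuple and a parameter $\mathbf{t}^{*}$ maximising this rank, and call the maximum $d$. The constant-rank theorem in $\R^n$ then produces a $d$-dimensional embedded submanifold $S\subset V$ near $\iota(\Phi(\mathbf{t}^{*}))$ equal to the local image of $\iota\circ\Phi$; pulling back by $\iota^{-1}$ yields a smooth $d$-dimensional piece of $M$ sitting inside $O_p$.

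Next comes the invariance step: this local piece exhausts $O_p$ near $q$. Given $Y\in\mathcal{F}$ and $r=\Phi(\mathbf{t}^{*})$, prepending $Y$ yields $\widetilde\Phi(s,\mathbf{t})=(\exp sY)\Phi(\mathbf{t})$. By maximality the rank of $\iota\circ\widetilde\Phi$ at $(0,\mathbf{t}^{*})$ is still $d$, so $\partial_s(\iota\circ\widetilde\Phi)|_{s=0}\in T_{\iota(r)}S$. On the submanifold $\iota^{-1}(S)$, which is an ordinary smooth manifold, $Y$ restricts to a smooth vector field; by uniqueness of the maximal integral curve (the theorem preceding this one) the trajectory $s\mapsto(\exp sY)(r)$ remains in $\iota^{-1}(S)$ for small $s$. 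Iterating over all $Y\in\mathcal{F}$ and short times, every orbit point sufficiently close to $q$ lies in $\iota^{-1}(S)$, and the reverse containment is automatic, so $\iota^{-1}(S)$ is a neighbourhood of $q$ in $O_p$.

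These local models fit together consistently because at any common point two maximal-rank pieces share the same $d$-dimensional tangent space inside $\R^n$ and therefore coincide as submanifold germs; the result is a smooth $d$-manifold structure on $O_p$ whose inclusion into $M$ is smooth and immersive by construction. The principal obstacle, in my view, is the invariance argument in the third paragraph. \emph{Rank} is only defined after embedding a subcartesian neighbourhood into $\R^n$, so the classical maximal-rank conjugation trick must be executed inside a shrinking chain of subcartesian charts and then patched together using uniqueness of integral curves. This technicality, silent in the classical Sussmann argument, is where the subcartesian hypothesis and the preceding integral-curve theorem do their real work.
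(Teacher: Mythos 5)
The paper itself offers no argument here: the ``proof'' is a citation to \'Sniatycki's monograph \cite{sniatycki13}, where the theorem is established by exactly the route you choose, namely an adaptation of Sussmann's orbit theorem to subcartesian differential spaces. So your architecture --- orbits as reachable sets, local models of maximal rank via evaluation maps composed with a subcartesian chart, an invariance step showing the flows preserve the local model, and patching --- is the correct one and coincides with that of the cited source.

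Two steps of your sketch need repair before it is a proof. First, the patching claim is false as stated: two $d$-dimensional submanifolds of $\R^n$ through a common point with the same tangent space there need not coincide as germs (compare $y=0$ and $y=x^2$ in $\R^2$). The correct argument is the one you already use for invariance: near a common point each maximal-rank piece is swept out by short flows applied to points of the other, so each contains a neighbourhood of the point in the other. Second, the phrase ``an ordinary smooth $\R^n$-valued map of an open set in $\R^k$ with a well-defined rank'' hides the two places where the subcartesian hypotheses actually work. (i) The admissible parameter set is open only because the $X_i$ are \emph{vector fields} in the sense of the Definition preceding the theorem, i.e.\ their flows are local diffeomorphisms; for a mere derivation of $C^{\infty}(M)$ the maximal integral curve can live on a half-closed interval (e.g.\ $\partial_x$ at the endpoint of $[0,\infty)$), the parameter domain fails to be open, and the rank apparatus collapses. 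This distinction is the whole point of singling out vector fields among derivations, and your proof should say explicitly where it is used. (ii) Joint smoothness of $(t_1,\dots,t_k)\mapsto\Phi(t_1,\dots,t_k)$ as a map of differential spaces is itself a lemma requiring proof; smoothness of each $\exp t_iX_i$ separately does not formally yield smooth dependence of the composite on all the parameters simultaneously. Finally, you fix the maximal rank $d$ at one point $q$; its constancy along the whole orbit should be recorded, and follows because each $\exp tX$ is a local diffeomorphism carrying orbit germs to orbit germs. With these repairs your sketch reproduces the proof in \cite{sniatycki13}.
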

\begin{proof}
The proof may be found in \cite{sniatycki13}.
\end{proof}
\begin{theorem}
   If $M =\bar{P}$ is the space of orbits of a proper action of a connected 
Lie group $G$ on a manifold $P$, then orbits of the family of all vector fields 
on $M$ coincide with the strata of the orbit type stratification of $P$.
\end{theorem}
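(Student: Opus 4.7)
The plan is to establish the conclusion by proving two containments: every orbit-type stratum of $\bar{P}$ sits inside an orbit of the family of all vector fields, and conversely every such orbit sits inside a single stratum.

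For the first containment, I would work locally via the slice theorem. Fix a stratum $\bar{S}$ and a point $\bar p \in \bar S$ with representative $p\in P$ and isotropy $H=G_p$. A neighbourhood of the orbit $G\cdot p$ is $G$-equivariantly modelled on the twisted product $G\times_H V$ for a linear $H$-slice $V$, so a neighbourhood of $\bar p$ in $\bar{P}$ is modelled on $V/H$, and the orbit-type stratum through $\bar p$ corresponds to the image of the fixed set $V^H$ in $V/H$, which (since $H$ acts trivially on $V^H$) is just $V^H$ itself as a smooth submanifold. Given a tangent vector $\bar v \in T_{\bar p}\bar{S}$, I would lift it to a vector in $V^H$, extend by a bump function to a local $H$-invariant vector field on $V$ tangent to $V^H$, promote it to a local $G$-invariant vector field on $G\times_H V$, and use properness together with an invariant partition of unity to globalize to a $G$-invariant vector field $\tilde X$ on $P$. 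The flow of $\tilde X$ preserves isotropy type, so $\tilde X$ descends to a well-defined vector field $X$ on $\bar{P}$ with $X(\bar p)=\bar v$. Finitely many such fields then connect any two points lying in the same connected component of $\bar S$.

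For the reverse containment, let $X$ be any vector field on $\bar{P}$. By the definition of vector field on a subcartesian differential space, $\exp tX$ is a local diffeomorphism of $\bar{P}$. The key point is that the orbit-type stratification is an \emph{intrinsic} invariant of the differential-space structure: the isomorphism class of the local ring of smooth germs at $\bar p$, together with the local model $V/H$ supplied by the slice theorem, pins down the conjugacy class of $H$ in $G$. Consequently every smooth automorphism of $\bar{P}$ preserves the stratification, so the integral curve of $X$ through $\bar p$ remains in the stratum containing $\bar p$, and the orbit through $\bar p$ of the family of vector fields lies entirely in that stratum.

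The hard part will be this reverse containment. For Hamiltonian vector fields of $G$-invariant functions, stratum preservation follows readily from the Sjamaar--Lerman local normal form; but for an arbitrary derivation $X$ of $C^\infty(\bar{P})$ one must argue that even a derivation not a priori tied to the $G$-action cannot cross orbit types. This reduces to showing that the local ring of smooth germs on $\bar{P}$ detects the conjugacy class of the isotropy, a fact that ultimately rests on the slice theorem together with standard invariant-theoretic properties of the linear $H$-action on $V$.
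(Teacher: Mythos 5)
The paper itself offers no proof of this theorem --- it defers entirely to the cited monograph of \'Sniatycki --- so your proposal must be judged on its own merits. Your first containment is sound and standard: the slice theorem gives the local model $G\times_H V$, one builds $G$-invariant vector fields on $P$ tangent to the orbit-type submanifolds using an invariant partition of unity (properness), these descend by Proposition \ref{appendix-proposition}, and finitely many flows connect any two points of a connected component of a stratum.

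The gap is in the reverse containment, exactly where you flagged the difficulty. The engine you propose --- that the isomorphism class of the local ring of germs of $C^{\infty}(\bar{P})$ at $\bar{p}$ pins down the conjugacy class of $H$ in $G$ --- is false. The standard actions of $\mathbf{Z}_2$ on $\mathbf{R}$, of $S^1$ on $\mathbf{C}$, and of $\mathrm{SU}(2)$ on $\mathbf{C}^2$ all produce the orbit-space germ $([0,\infty),0)$ with the differential structure of restrictions of smooth functions of one variable (by Schwarz's theorem the smooth invariants are in each case the smooth functions of $|x|^2$), yet the isotropy groups at the singular point are $\mathbf{Z}_2$, $S^1$ and $\mathrm{SU}(2)$: the local ring does not even detect the dimension of $H$. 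What you actually need is the weaker claim that, within a fixed orbit space, no local diffeomorphism carries a point of one orbit-type stratum to a point of another; but since the orbits of the family of all vector fields form a diffeomorphism-invariant partition by construction, asserting that the orbit-type stratification is diffeomorphism-invariant is essentially the theorem itself, and invoking it is circular. The mechanism that genuinely forces tangency is the requirement that $\exp tX$ be a local \emph{diffeomorphism} --- in particular an open map --- rather than a mere derivation: on $[0,\infty)$ the derivation $d/du$ is a perfectly good derivation of the structure ring whose integral curve through $0$ instantly leaves the stratum $\{0\}$, and it fails to be a vector field only because $[0,\epsilon)\mapsto[t,t+\epsilon)$ is not open. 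The proof in the cited reference works in the local model $\mathbf{R}^k\times(V'/H)$ with $(V')^H=0$ and shows directly that every vector field in this strong sense is tangent to $\mathbf{R}^k\times\{\text{cone point}\}$, using the openness of the flow maps and the structure of the minimal stratum; your sketch needs that argument supplied, and the local-ring route will not produce it.
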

\begin{proof}
The proof may be found in \cite{sniatycki13}.
\end{proof}

A main concern of this paper is when the connected Lie group $G$ has a proper 
Hamiltonian action $\phi$ on $P$ and this action can be divided out to produce a 
reduced space $\bar{P}$ that also has reduced dynamics. As a first step we 
extend a well-known theorem for free and proper actions to the 
case of merely proper actions.  Let 
\[
\phi :G\times P\rightarrow P:(g,p)\mapsto \phi(g,p)=:\phi _{g}(p)
\]
be a proper action of the connected Lie group $G$ on the manifold $P$ and let 
$\rho :P\rightarrow 
\bar{P}$ 
be the orbit map. Then  $\rho ^{\ast }(C^{\infty }(\bar{P}))=C^{\infty
}(P)^{G}$. For a vector field $X$ on $P,$
\[
\phi _{g\ast }X(p)=T\phi _{g}(X(\phi _{g^{-1}}(p))),
\]
and for a function $f\in C^{\infty }(P),$
\[
(\phi _{g\ast }X)\cdot f=\phi _{g^{-1}}^{\ast }(X\cdot \phi _{g}^{\ast }f).
\]

\begin{proposition}\label{appendix-proposition}
If $X$ is a vector field on $P$ such that $\phi _{g\ast }X-X$ is tangent to
orbits of the action of $G$, then $X$ descends to a vector field $\bar{X}
=\rho _{\ast }X$ on $\bar{P}.$
\end{proposition}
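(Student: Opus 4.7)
The plan is to proceed in two stages. First, I would show that $X$ preserves the subring $C^{\infty}(P)^{G}$ of $G$-invariant smooth functions, so that it induces a derivation $\bar{X}$ of $C^{\infty}(\bar{P})$ via the identification $\rho^{\ast}C^{\infty}(\bar{P}) = C^{\infty}(P)^{G}$; then I would verify that this derivation is actually a vector field in the differential-space sense, by descending the flow of $X$ through the orbit map.

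For the first stage, take any $f \in C^{\infty}(P)^{G}$. The two identities displayed just before the proposition give
\[
  (\phi_{g\ast}X) \cdot f \;=\; \phi_{g^{-1}}^{\ast}(X \cdot \phi_{g}^{\ast}f)
                            \;=\; \phi_{g^{-1}}^{\ast}(X \cdot f),
\]
since $\phi_{g}^{\ast}f = f$. Hence $(\phi_{g\ast}X - X)\cdot f = \phi_{g^{-1}}^{\ast}(X\cdot f) - X\cdot f$. By hypothesis, $\phi_{g\ast}X - X$ is tangent to the $G$-orbits, and $f$ is constant along those orbits, so the left-hand side vanishes. It follows that $\phi_{g^{-1}}^{\ast}(X \cdot f) = X \cdot f$ for every $g \in G$, so $X \cdot f \in C^{\infty}(P)^{G}$. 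Setting $\bar{X}(\rho_{\ast}f) := \rho_{\ast}(X \cdot f)$ then defines a derivation of $C^{\infty}(\bar{P})$.

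For the second stage, let $\varphi_{t}$ denote the (local) flow of $X$ on $P$. For $p \in P$ and $f \in C^{\infty}(P)^{G}$, the curve $t \mapsto f(\varphi_{t}(p))$ satisfies $\tfrac{d}{dt}f(\varphi_{t}(p)) = (X\cdot f)(\varphi_{t}(p))$, so $t \mapsto \rho(\varphi_{t}(p))$ is an integral curve of $\bar{X}$ through $\rho(p)$. The uniqueness statement in the integral-curve theorem quoted above forces $\rho\circ\varphi_{t}(p)$ to depend only on $\rho(p)$, so $\varphi_{t}$ descends to a map $\bar{\varphi}_{t}$ of $\bar{P}$ with $\bar{\varphi}_{t}\circ\rho = \rho\circ\varphi_{t}$. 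For any $\bar{f} \in C^{\infty}(\bar{P})$, the pullback $\rho^{\ast}(\bar{\varphi}_{t}^{\ast}\bar{f}) = \varphi_{t}^{\ast}\rho^{\ast}\bar{f}$ is smooth and $G$-invariant on $P$, so $\bar{\varphi}_{t}^{\ast}\bar{f} \in C^{\infty}(\bar{P})$; hence $\bar{\varphi}_{t}$ is smooth, with smooth inverse $\bar{\varphi}_{-t}$. Thus $\bar{X}$ generates a local one-parameter group of local diffeomorphisms, and $\bar{X} = \rho_{\ast}X$ is a vector field on $\bar{P}$.

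The main obstacle is the second stage: the flow $\varphi_{t}$ of $X$ need not commute with the $G$-action (only its difference from its push-forward is controlled, and only modulo the orbit directions), so one cannot simply assert that it descends. The key leverage is the uniqueness of integral curves on the subcartesian differential space $\bar{P}$, which pins down $\rho\circ\varphi_{t}$ as a function of $\rho(p)$ alone and lets the diffeomorphism property of $\varphi_{t}$ be transported to the quotient.
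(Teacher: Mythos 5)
Your proposal is correct and follows essentially the same route as the paper: first show $X\cdot f$ is $G$-invariant for invariant $f$ (the identical computation with $\phi_{g\ast}X-X$ tangent to orbits), then descend the local flow to the quotient and verify smoothness of $\exp t\bar{X}$ and its inverse by pulling back through $\rho$. The only cosmetic difference is that you justify the descent of the flow by uniqueness of integral curves of $\bar{X}$ on the subcartesian space $\bar{P}$, whereas the paper phrases the same step as ``invariant functions separate $G$-orbits, so $\exp tX$ maps orbits to orbits''; these amount to the same argument.
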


\begin{proof}
For a $G$-invariant function $f$ on $P$ and $g\in G,$
\[
\begin{split}
\phi _{g^{-1}}^{\ast }(X\cdot f) &=(\phi_{(g^{-1})\ast}X)\cdot f \\
& =(\phi_{(g^{-1})\ast }X)\cdot \phi _{g}^{\ast }f  \\
& =(\phi _{g\ast }X)\cdot f \\
& =(\phi _{g\ast}X-X)\cdot f+X\cdot f \\
& =X\cdot f,
\end{split}
\]
because $(\phi _{g\ast }X-X)$ is tangent to orbits of the action of $G$ and $f$ 
is $G$-invariant. Hence, $X\cdot f$ is $G$-invariant. Thus $X$ is a
derivation of $C^{\infty }(P)^{G}$, which implies that it descends to a
derivation $\bar{X}=\rho _{\ast }X$ of $C^{\infty }(\bar{P}).$ Integration
of the derivation $\bar{X}$ gives rise to a maximal integral curve $\bar{c}$ of 
$\bar{X}$ through $\bar{p}$ such
that $\bar{c}(t)=\rho \circ c(t)$, where $c$ is the maximal integral curve
of $X$ through $p$.

It remains to prove that translations along integral curves of $\bar{X}$
gives rise to a local one-parameter group $\exp t\bar{X}$ of local
diffeomorphisms of $\bar{P}.$ The vector field $X$ on $P$ generates a local 
one-parameter group $\exp tX$ of local diffeomorphisms of $P$ such that 
$t\mapsto 
(\exp tX)(p)$
is the maximal integral curve of $X$ through $p$. Since the derivation $X$
of $C^{\infty }(P)$ preserves $C^{\infty }(P)^{G}$, and $G$-invariant
functions on $P$ separate $G$-orbits, it follows that $\exp tX$ maps $G$-orbits 
to $G$-orbits. 

It is important to note that the assertion of mapping $G$-orbits to $G$-orbits 
does not require the flow of $X$ to be complete, because $\exp tX$ is 
interpreted in the sense that if two points $p$ and $q$ are in the same 
$G$-orbit and $(\exp tX)(p)$ and $(\exp tX)(q)$ are both defined, then $(\exp 
tX)(p)$ and $(\exp tX)(q)$ are in the same $G$-orbit. It is in this way that 
the 
reduced local flow $\exp t\bar{X}$ is defined.  Note that 
the reduced local flow may be defined for all time even though the original 
vector field may be incomplete everywhere, and have no positive minimum 
time of existence on any $G$-orbit.\footnote{On $\{(x,y)\,| \, y>0\}$ consider 
the vector field $X=\partial_x+y^2\partial_y$, with group action generated by 
$y\partial_y$.}

Therefore, $(\exp t\bar{X})(\bar{p})=(\exp t\bar{X}
)(\rho (p))=\rho \circ (\exp tX)(p)$. In other
words, 
\[
\rho \circ \exp tX=(\exp t\bar{X})\circ \rho .
\]
Since $\exp tX$ is a local one-parameter group of local transformations of $P$, 
it follows that $\exp t\bar{X}$ is a local one-parameter group of local
transformations of $\bar{P}$. Since $\bar{f}\in C^{\infty }(\bar{P})$ implies 
that the pull-back $f=\rho ^{\ast }\bar{f}\in C^{\infty }(P)^{G}$, it follows 
that  
\[
\rho ^{\ast }(\exp t\bar{X})^{\ast }\bar{f}=(\exp tX)^{\ast }\rho ^{\ast }
\bar{f}=(\exp tX)^{\ast }f
\]
is $G$-invariant. This implies that $(\exp t\bar{X})^{\ast }\bar{f}\in 
C^{\infty }(\bar{P})$, and ensures that $\exp t\bar{X}$ is a local 
diffeomorphism of $\bar{P}.$
\end{proof}
Since the projection of the orbit type stratification of $P$ to $\bar{P}$ is a 
stratification of $\bar{P}$, and these strata coincide with the orbits of 
the family of all vector fields on $\bar{P}$, it follows that
\begin{corollary}
  The reduced vector field $\bar{X}$ preserves the stratification of $\bar{P}$ 
by orbit type.
\end{corollary}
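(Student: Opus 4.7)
My plan is to show that this corollary follows almost immediately from the two results quoted just before it, together with Proposition~\ref{appendix-proposition} which establishes that $\bar{X}$ is a genuine vector field on $\bar{P}$. The logical content reduces to the observation that a single vector field cannot carry a point off the orbit of the full family of vector fields to which it belongs.

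Concretely, I would proceed as follows. First, invoke the theorem that identifies the strata of the orbit type stratification of $\bar{P}$ with the orbits of the family $\mathcal{F}$ of all vector fields on $\bar{P}$. Thus, to prove that $\exp t\bar{X}$ preserves the stratification, it suffices to show that for each $\bar{p} \in \bar{P}$ and each admissible $t$, the point $(\exp t\bar{X})(\bar{p})$ lies in the same $\mathcal{F}$-orbit as $\bar{p}$. Second, note that by Proposition~\ref{appendix-proposition}, the derivation $\bar{X} = \rho_{\ast} X$ is itself a vector field on $\bar{P}$, hence belongs to $\mathcal{F}$. Third, recall that an $\mathcal{F}$-orbit through $\bar{p}$ is, by definition, the set of points obtainable from $\bar{p}$ by finite compositions of local flows of vector fields in $\mathcal{F}$. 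Since $\exp t\bar{X}$ is such a flow with $\bar{X} \in \mathcal{F}$, the point $(\exp t\bar{X})(\bar{p})$ lies in the $\mathcal{F}$-orbit of $\bar{p}$, and therefore in the same stratum.

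There is essentially no obstacle to clear here; the work was already done in establishing the preceding theorems and proposition. The only matter requiring a bit of care is the local-in-time nature of $\exp t\bar{X}$: one must confirm that the argument is valid for each $\bar{p}$ and each $t$ in the (possibly bounded) domain of the maximal integral curve of $\bar{X}$ through $\bar{p}$, which is exactly what Proposition~\ref{appendix-proposition} guarantees. So the proof amounts to stringing together the three observations above in a single short paragraph.
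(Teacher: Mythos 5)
Your proof is correct and is essentially the paper's own argument: the paper likewise deduces the corollary from the fact that the strata of $\bar{P}$ coincide with the orbits of the family of all vector fields on $\bar{P}$, combined with Proposition~\ref{appendix-proposition} showing $\bar{X}$ is a member of that family. You merely spell out in more detail why a flow of a member of the family cannot leave an orbit of the family, which is a fair elaboration of the same route.
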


\section{Not-quite-Hamiltonian reduction}
Suppose that the Hamiltonian system $(P,\omega,h)$ has a proper Hamiltonian 
action $\phi$ of the connected Lie group $G$ with momentum map 
$j=(j_1,\dots,j_n)$. Let 
the infinitesimal generators of the $G$-action be $X_a$, and 
$X_a\lefthook\omega=dj_a$. Suppose 
that the Hamiltonian $h$ is not $G$-invariant, but the Poisson brackets satisfy
\[
  \{j_a,h\} = f_a(j_1,\dots,j_n)
\]
for some functions $f_a$ of the momenta.\footnote{This global formulation has 
been given for the sake of cleanliness.  Locally, it seems that it is a 
question  of to what extent the condition ${\phi_g}_*X_h-X_h\equiv 0 
\,\,(\text{mod} \,X_a)$ implies the existence of functions 
$k_a^b$ defined by $k_a^b\,dj_b=-d\{j_a,h\}$  which are functionally dependent 
on the $j_a$s. Some of the subtleties involved are discussed in \cite{newns}, 
and related global issues are discussed in \cite{karshon-lerman}.} This implies 
that the variation in the 
Hamiltonian vector field under the group action is tangent to the group orbits, 
\[
  {\phi_g}_*X_h-X_h\equiv 0 \,\,(\text{mod} \,X_a),
\]
and consequently the vector field $X_h$ descends to the reduced vector field 
$\bar{X}:=\rho_*X_h$ on the quotient $\bar{P}$. Furthermore, it
implies that the Poisson bracket of a $G$-invariant function with the
Hamiltonian is still a $G$-invariant function.  Said slightly
differently, the mapping
\[
\{\cdot,h\}:C^{\infty}(P)^G \rightarrow C^{\infty}(P)^G : f\mapsto \{f,h\}
\]
is an {\it outer} Poisson morphism (it is an outer morphism because
$h\notin C^{\infty}(P)^G$.)  This gives (singular) reduced dynamics on
the quotient $\bar{P}$ in the Poisson form
\[
  \dot{f}=\{f,h\} \qquad f\in C^{\infty}(P)^G,
\]
as $C^{\infty}(P)^G$ is identified with the smooth functions on $\bar{P}$.

\begin{example}
  Consider a particle moving in linear gravity. This system has the Hamiltonian 
description 
\[
  h= \frac12 p^2 +q.
\]
Then the vector field $Y=\partial_q$ is a symmetry of the symplectic form, has 
momentum $j=p$, and even though the Hamiltonian is not $Y$-invariant, 
$\pounds_Yh\neq0$, the derivative $\pounds_Ydh=0$. Since an invariant function 
is just a function of the variable $p$, the Poisson bracket of an invariant 
function with the Hamiltonian is an invariant function.  Note that 
$\pounds_Ydh=0$ yields, by the magic formula,
\[
\pounds_Ydh = d(Y\lefthook dh) + Y\lefthook(ddh)=  d(Y\lefthook dh)=0,
\]
which implies that $Y\lefthook dh=c$, $c$ a constant. It follows that the 
time evolution of the momentum $j$ is of the form $j(t)=ct+d$, for some 
constants $c$ and $d$.
\end{example}

\begin{example}
  The classical particle with spin.  One may reduce the spinning charged rigid 
body in a magnetic field with the nonlinear constraint of constant length of 
angular momentum to get Souriau's model of a classical particle with spin (see 
\cite{bates-nester} and \cite{cushman-kempainen-sniatycki}.)
\end{example}

\begin{example}
  Collective Hamiltonians \cite{guillemin-sternberg80}.  If the Hamiltonian is 
a pullback of a function on 
the dual algebra by the momentum map, $h=j^*f$, $f:\mathfrak{g}^*\rightarrow 
\R$, then the Casimirs play the role of the invariant functions.
\end{example}

\begin{example}
  Rotating coordinates. Let $G=\SO(2)$ be the group of rotations about the $x^3$ axis in $\R^3$, and $j$ the momentum for the lifted action on the cotangent bundle $T^*\R^3$. Since $G$ acts isometrically for the standard metric on $\R^3$, one sees the addition of the momentum $j$ to the Hamiltonian in the $G$-moving coordinate system.  This is a specific case of the general phenomenon of the addition of collective terms in Hamiltonians viewed in co-moving coordinates along one-parameter isometry groups.  
  \end{example}

\section{Reconstruction (1)}
A general\footnote{This is a general method because it applies to any
  reduced dynamics, Hamiltonian or otherwise. This is the approach taken, for 
example in \cite{cushman-duistermaat-sniatycki}.} way to reconstruct an
integral curve $c(t)$ from an integral curve $\bar{c}(t)=\rho(c(t))$
in the reduced space is to first pull back the $G$-action to any lift
$b(t)$ of $\bar{c}(t)$.  The reconstruction problem is to find a curve
$g(t)$ in the group $G$ so that
\begin{equation} \label{action}
  c(t) = \phi(g(t),b(t))
\end{equation}
satisfies the dynamical equation $\dot{c}=X_h(c)$. Differentiation of
equation (\ref{action}) with respect to the parameter $t$ yields a
non-autonomous differential equation for the group variable:
\[
  D_1\phi(g,b)\cdot \dot{g} +D_2\phi(g,b)\cdot \dot{b} = X_h. 
\]

However, this approach neglects a key component of the Hamiltonian
structure of the system, namely the time dependence of the momenta.  A
refinement of the reconstruction procedure that is not only adapted to
the Hamiltonian structure, but furthermore reduces to the usual
reconstruction procedure in the case when the Hamiltonian is
$G$-invariant, runs as follows.

Observe that the Poisson brackets $\{j_a,h\}=f_a(j_1,\dots,j_n)$ define a 
vector field on the dual of the Lie algebra $\mathfrak{g}^*$ given by the 
differential equations
\[
  \frac{dj_a}{dt}=f_a(j_1,\dots,j_n).
\]
This differential equation is the {\it first reconstruction equation}.  
Denote by $\mu(t)$ an integral curve of this vector field. To find the integral 
curve 
$c(t)$ with initial condition $p=c(0)$ then requires two curves: the first is 
the integral curve $\bar{c}(t)$ of the reduced dynamics with initial condition 
$\rho(p)=\bar{c}(0)$, and the second is the integral curve $\mu(t)$ with initial 
condition $j(p)=\mu(0)$. The curve $\bar{c}(t)$ is then lifted to a curve $b(t)$ 
that satisfies the two constraints $\rho(b(t))=\bar{c}(t)$ and $j(b(t))=\mu(t)$. 
Once again, the dynamical reconstruction problem is to find a curve $g(t)$ in 
the group $G$ so that $c(t) = \phi(g(t),b(t))$ satisfies the dynamical equation 
$\dot{c}=X_h(c)$, but now the curve $g(t)$ lies in the stability group 
$G_{\mu(t)}$.\footnote{In the special case of Hamiltonian reduction, the 
functions $f_a\equiv 0$ because the momenta are constants of motion, so the 
curve $\mu(t)$ is a constant, and hence the stability group $G_{\mu}$ does not 
depend on $t$.} Differentiation of this condition with respect to $t$ yields the 
{\it second reconstruction equation}:
\[
  D_1\phi(g,b)\cdot \dot{g} +D_2\phi(g,b)\cdot \dot{b} = X_h. 
\]
This version of the equation is chosen rather than
(\ref{action}) because the group is smaller, even though it is varying
in time.\footnote{An analogous construction may be found in
  \cite{shnider-sternberg}, in the case of commuting invariance
  groups.}

A special case of the preceding is in some sense quite typical.
Suppose that the stability group $\mu(0)$ is abelian and of minimal dimension, 
and that this is stable in the sense that the stability group
$G_{\mu(t)}$ is also abelian and connected, and so of the form $\R^r\times\T^s$ 
for some
integers $r$ and some $s$. This allows us to choose an interval
$I:=(-\epsilon,\epsilon)$ about $t=0$, an identification of
$G_{\mu(t)}$ with $G_{\mu(0)}$ and thus realize a trivialization of
$\phi(G_{\mu(t)}, b(t))$ as $I\times G_{\mu(0)}$ where the lift $b(t)$
is the product $I\times e$, where $e$ is the identity in the group.  To see how
this might work in practice, consider the following example.

\begin{example}
  (The elliptic particle.)  Consider the Hamiltonian system
  $(P,\omega,h)$ with configuration space $Q=\R^2$, phase space
  $P=T^*Q=T^*\R^2$, projection $\pi:P\rightarrow Q$, symplectic form
  $\omega=dx\wedge dp_x +dy\wedge dp_y$, and Hamiltonian
\[
  h = \frac{1}{2}((1+k^2/2+y^2)p_x^2 -2xyp_xp_y +(1-k^2/2+x^2)p_y^2),
\]
with $0<k<1$.  The Euclidian group $G=\SE(2,\R)$ acts on the configuration space 
$Q=\R^2$, as the group of matrices with affine action
\[
  \phi:G\times Q\rightarrow Q:\left(\begin{pmatrix}
    \cos\theta & -\sin\theta & u \\
    \sin\theta & \cos\theta & v \\
    0 & 0 & 1  
  \end{pmatrix},
   \begin{pmatrix} 
   x \\ y 
    \end{pmatrix}
  \right) \rightarrow 
  \begin{pmatrix}
    \cos\theta \,x -\sin\theta \,y +u \\
      \sin\theta \, x - \cos\theta \, y +v
  \end{pmatrix}.
\]
Define a basis for the Lie algebra $\mathfrak{g}=\se(2,\R)$ by setting
\[
  e_1 = \begin{pmatrix}
          0 & 0 & 1 \\
          0 & 0 & 0 \\
          0 & 0 & 0
        \end{pmatrix}, \quad
     e_2 = \begin{pmatrix}
          0 & 0 & 0 \\
          0 & 0 & 1 \\
          0 & 0 & 0
        \end{pmatrix}, \quad   
     e_3 = \begin{pmatrix}
          0 & -1 & 0 \\
          1 & 0 & 0 \\
          0 & 0 & 0
        \end{pmatrix}.
\]
The matrices $f^1=2e_1^t$, $f^2=2e_2^t$, and $f^3=e_3^t$ form a dual basis 
of 
the dual of the Lie algebra $\mathfrak{g}^*=\se(2,\R)^*$ in the sense that if 
the natural pairing $\mathfrak{g}^*\times\mathfrak{g}\rightarrow \R$ is 
given by the matrix multiplication $\langle\mu,X\rangle=\frac{1}{2}\tr(\mu 
X)$, the pairings
\[
  \langle f^k,e_l\rangle = \delta^k_l.
\]

The action lifts to a Hamiltonian action on phase space with momentum map 
$j:P\rightarrow \mathfrak{g}^*$ with components $j=(j_1,j_2,j_3)$, where $j_1 
=p_x$, $j_2=p_y$, $j_3= yp_x-xp_y$.  In matrices,
\[
  j:P\rightarrow \mathfrak{g}^*:(x,y,p_x,p_y)\rightarrow 
  \begin{pmatrix}
  0 & j_3 & 0 \\
  -j_3 & 0 & 0 \\
  2j_1 & 2j_2 & 0
  \end{pmatrix}.
\]
The components of the momentum map satisfy the Poisson bracket relations
\[
  \{j_1,j_2\}=0, \qquad \{j_2,j_3\}=-j_1, \qquad \{j_3,j_1\}=-j_2.
\]
The Hamiltonian is not invariant under the $G$-action on the phase space, as 
\[
  \{j_1,h\}= j_2j_3, \qquad \{j_2,h\}= -j_1j_3, \qquad \{j_3,h\}= -k^2j_1j_2.
\]
The $G$-invariant functions on $P$ are all functions of 
$\sigma=|p|^2=p_x^2+p_y^2$.  Thus the equation on the reduced space is given 
by the Poisson bracket 
\[
  \dot{\sigma} = \{\sigma,h\}=0,
  \]
which immediately integrates to $\sigma=$ constant.\footnote{The
  reader will immediately observe that the relation $\{\sigma,h\}=0$
  implies the system is completely integrable in the sense of
  Liouville.  However, the construction of action-angle variables is
  more involved than our subsequent analysis.}  The first
reconstruction equation is the differential equation in the dual
algebra
\[
  \frac{dj_1}{dt}=j_2j_3, \qquad 
\frac{dj_2}{dt}=-j_3j_1, \qquad \frac{dj_3}{dt}=-k^2j_2j_3.
\]
For simplicity, the reconstruction will be given for the integral curve with 
initial condition $(x_0,y_0,{p_x}_0,{p_y}_0)=(-1,0,0,1)$, so the initial values 
\[
  (\sigma_0,{j_1}_0,{j_2}_0,{j_3}_0)=(1,0,1,1).
  \]
Denote by $\mu(t)=(\mu_1(t),\mu_2(t),\mu_3(t))$ the solution of this
initial value problem in $\mathfrak{g}^*$.  This implies that
\[
  \sigma(t)=1, \quad \mu_1(t)=\sn(t;k), \quad \mu_2(t)=\cn(t;k), \quad 
\mu_3(t)=\dn(t;k),
\]
where $\sn(t;k)$, $\cn(t;k)$ and $\dn(t;k)$ are the Jacobi elliptic functions.

Now we should examine the second reconstruction equation and the
isotropy subgroup $G_{\mu(t)}$, (the subgroup of $G$ that fixes
$\mu(t)$ under the coadjoint action), which is the one-parameter
subgroup
\[
  G_{\mu(t)} = \{ \exp sX\,|\,s\in\R\}
\]
where
\[
  X=2\mu_1(t)e_1-2\mu_2(t)e_2+\mu_3(t)e_3 \in \mathfrak{g}.
\]
However, we gain a somewhat different insight if we proceed a little differently than a direct application of the theory suggests.  The component $j_3$ of the momentum map implies
\[
\mu_3(t) = y\mu_1(t) - x\mu_2(t),
\]
which is the equation of a moving line in the $xy$-plane. Picking the point $q_0=\mu_3(-\mu_2,\mu_1)$ to be the point on the line nearest the origin, parametrize the moving line as
\[
q(s) = q_0 +s(\mu_1,\mu_2),
\]
where $s$ is an arc length parameter on the line.  A differential equation for the parameter $s$ yielding the reconstruction of the desired integral curve is 
\[
\frac{d}{dt}  [ q(s(t),t)] = \pi_*X_h.
\]
Taking the inner product of this with the unit vector $(\mu_1,\mu_2)$ gives
\[
\begin{split}
\dot{s} = \mu_1\dot{x}+\mu_2\dot{y}  = & \,\,(1+k^2/2+y^2)p_x^2-xyp_xp_y  + \\
                                               &+(1-k^2/2+x^2)p_y^2 -xyp_xp_y\\
                                              = & \,\,1+\mu_3^2+\frac{k^2}{2}(\mu_1^2 -\mu_2^2) \\
                                              = & \,\,2-k^2/2, \quad {\text{a constant!}}
\end{split} 
\]
The reconstructed integral curve $c(t)$ immediately follows.
\end{example}

\section{Reconstruction (2)}
That the previous example had such a pretty solution suggests that
something deeper is at work.\footnote{The reader might suspect that
  this is due to the Hamiltonian being collective.  While correct, our
  view is that this is not the best answer, because it places misleading attention on the invariant functions being Casimirs. See
  \cite{guillemin-sternberg80} for more details.}  Our preferred view is to see the Hamiltonian as a sum of two commuting Hamiltonians,
symbolically written as $h=h_{\sigma}+h_j$, $\{h_{\sigma},h_j\}=0$, thinking of $h_{\sigma}$ as the invariant part and $h_j$ as the collective part.
This implies that the flow of the Hamiltonian $h$ may be found as the
composition of the flows of $h_{\sigma}$ and $h_j$.  Hence, an alternative reconstruction procedure presents itself: reconstruct the dynamics of the invariant part $h_{\sigma}$ using the {\it fixed} subgroup $G_{\mu(t)}$ as well as that of $h_j$ and then compose.  Note, however, that there is no free lunch here.  The flow of $h_{\sigma}$ must be reconstructed treating each point along the flow of $h_j$ as a new initial condition.  In other words, instead of integrating a time-dependent differential equation in which $\mu(t)$ varies, the integration is over a one-parameter family of equations, each of which have constant $\mu$.   

\section{Continuations}

Our discussion has left many avenues unexplored.  Possible further 
explorations include the following 
\begin{enumerate}
\item Assumptions on the group action.  For the sake of brevity, only proper 
group actions were considered, as the theory is now well established.  However, 
in some cases of interest, such as the coadjoint action of a Lie group on the 
dual of its Lie algebra, the action need not be proper.  There are more general 
types of group actions, such as {\it polite} actions (see 
\cite{bates-sniatycki14}) that allow reduction by symmetry and reconstruction in 
terms of differential equations on manifolds.  It would be very interesting to 
see to what extent the theory presented here extends to more general group 
actions.
\item Functional dependence.  This paper avoided all issues of 
functional dependence by assuming that the Poisson bracket $\{j_a,h\}$ was a 
globally defined function of the momenta. It would be interesting to be able 
to weaken this to the condition that the Hamiltonian vector field $X_h$ is 
tangent to the group orbits, as that suffices for the existence of reduced 
dynamics.  The global issues involved are somewhat subtle. For an 
example see theorem $4$ of \cite{karshon-lerman}.
\item Complete integrability.  In the theory of completely integrable 
Hamiltonian systems, the flow is seen to be linearized on tori.  This means that 
there is a local action of a torus group under which the flow is invariant.  The 
generalization of this theory to not-quite-Hamil\-tonian systems is unclear, as 
there is presently no precise notion of what a completely integrable 
not-quite-Hamil\-tonian system should be.
\item Geometric quantization.  It is of interest to understand to what extent 
quantization and reduction commute in the case under consideration.
\end{enumerate}

\vspace{20pt}

\noindent We wish to give our thanks to the referee whose careful 
dissection of an earlier version of this manuscript has resulted in a much 
improved version.

\bibliographystyle{plain}

\begin{thebibliography}{10}

\bibitem{arms-cushman-gotay}
J.~Arms, R.~Cushman, and M.~Gotay.
\newblock A universal reduction procedure for {H}amiltonian group actions.
\newblock In T.~Ratiu, editor, {\em The geometry of {H}amiltonian systems},
  pages 33--51. Springer-Verlag, New York, 1991.

\bibitem{arnold66}
V.~I. Arnold.
\newblock Sur la ge\'om\'etrie diff\'erentielle des groupes de lie de dimension
  infinie et ses applications \`a l'hydrodynamique des fluids parfaits.
\newblock {\em Annales de l'institut Grenoble}, 16:319--361, 1966.

\bibitem{bates98}
L.~Bates.
\newblock Examples of singular nonholonomic reduction.
\newblock {\em Reports on mathematical physics}, 42:231--247, 1998.

\bibitem{bates-sniatycki93}
L.~Bates and J.~\'{S}niatycki.
\newblock Nonholonomic reduction.
\newblock {\em Reports on mathematical physics}, 32:99--115, 1993.

\bibitem{bates-sniatycki2013}
L.~Bates and J.~\'Sniatycki.
\newblock An extension of the {D}irac theory of constraints.
\newblock {\em Journal of fixed point theory and applications}, 14(2):527--524,
  2013.

\bibitem{bates-sniatycki14}
L.~Bates and J.~\'Sniatycki.
\newblock Polite actions of non-compact {L}ie groups.
\newblock In D.~Chang, D.~Holm, G.~Patrick, and T.~Ratiu, editors, {\em
  Geometry, Mechanics, and Dynamics: The Legacy of Jerry Marsden}, volume~73 of
  {\em Fields Institute Communications}. Springer Verlag, 2015.

\bibitem{bates-nester}
L.~M. Bates and J.~M. Nester.
\newblock On {D'}{A}lember{t'}s principle.
\newblock {\em Communications in mathematics}, 19(1):57--72, 2011.

\bibitem{bierstone}
E.~Bierstone.
\newblock {\em The structure of orbit spaces and the singularities of
  equivariant mappings}, volume~35 of {\em Monografias de matem\'atica}.
\newblock Instituto de matem\'atica pura e applicada, Rio de Janiero, 1980.

\bibitem{churchill-kummer-rod}
R.~Churchill, M.~Kummer, and D.~Rod.
\newblock On averaging, reduction and symmetry in {H}amiltonian systems,.
\newblock {\em Journal of differential equations}, 49:359--414, 1983.

\bibitem{cushman-bates2015}
R.~{C}ushman and L.~{B}ates.
\newblock {\em Global aspects of classical integrable systems}.
\newblock Birkh\"{a}user, $2^{\rm{nd}}$ edition, to appear.

\bibitem{cushman-duistermaat-sniatycki}
R.~Cushman, J.~Duistermaat, and J.~\'{S}niatycki.
\newblock {\em Geometry of nonholonomically constrained systems}.
\newblock World Scientific, Singapore, 2010.

\bibitem{cushman-kempainen-sniatycki}
R.~Cushman, D.~Kempainen, and J.~\'Sniatycki.
\newblock A classical particle with spin realized by reduction of a nonlinear
  nonholonomic constraint.
\newblock {\em Reports on mathematical physics}, 41(1):133--142, 1998.

\bibitem{deleon97}
M.~de~Le\'on and D.~de~Diego.
\newblock Mechanical systems with nonlinear constraints.
\newblock {\em International journal of theoretical physics}, 36(4):979--995,
  1997.

\bibitem{dirac50}
P.~Dirac.
\newblock Generalized {H}amiltonian dynamics.
\newblock {\em Canadian journal of mathematics}, (2):129--148, 1950.

\bibitem{gotay-bos}
M.~Gotay and L.~Bos.
\newblock Singular angular momentum mappings.
\newblock {\em Journal of differential geometry}, 24(2):181--203, 1986.

\bibitem{gotay-nester-hinds78}
M.~Gotay, J.~Nester, and G.~Hinds.
\newblock Presymplectic manifolds and the {D}irac-{B}ergmann theory of
  constraints.
\newblock {\em Journal of mathematical physics}, 19:2388--2399, 1978.

\bibitem{guillemin-sternberg80}
V.~Guillemin and S.~Sternberg.
\newblock The moment map and collective motion.
\newblock {\em Annals of physics}, 127:220--253, 1980.

\bibitem{karshon-lerman}
Y.~Karshon and E.~Lerman.
\newblock The centralizer of invariant functions and division properties of the
  momentum map.
\newblock {\em Illinois journal of mathematics}, 41(3):462--487, 1997.

\bibitem{koiller}
J.~Koiller.
\newblock Reduction of some classical nonholonomic systems with symmetry.
\newblock {\em Archive for rational mechanics and analysis}, 118:113--148,
  1992.

\bibitem{lusanna}
L.~Lusanna.
\newblock The second {N}oether theorem as the basis of the theory of singular
  {L}agrangians and {H}amiltonian constraints.
\newblock {\em Rivista del Nouvo Cimento}, 14(3):1--75, 1991.

\bibitem{marsden-weinstein}
J.~Marsden and A.~Weinstein.
\newblock Reduction of symplectic manifolds with symmetry.
\newblock {\em Reports on mathematical physics}, 5:121--130, 1974.

\bibitem{meyer}
K.~Meyer.
\newblock Symmetries and integrals in mechanics.
\newblock In M.~Peixoto, editor, {\em Dynamical systems}, pages 259--272.
  Academic Press, New York, 1973.

\bibitem{newns}
W.~Newns.
\newblock Functional dependence.
\newblock {\em American mathematical monthly}, 74:911--920, 1967.

\bibitem{noether}
E.~Noether.
\newblock Invariante variationsprobleme.
\newblock {\em Nachtrichten von der Gesellschaft der Wissenshaften zu
  G\"ottingen Mathematisch-physickalische Klasse}, pages 235--258, 1918.

\bibitem{ratiu-ortega}
T.~Ratiu and J.-P. Ortega.
\newblock {\em Momentum maps and {H}amiltonian reduction}.
\newblock Birkhauser, 2004.

\bibitem{shnider-sternberg}
S.~Shnider and S.~Sternberg.
\newblock Dimensional reduction and symplectic reduction.
\newblock {\em Il Nouvo Cimento}, 73B(1):130--138, 1983.

\bibitem{sjamaar-lerman}
R.~Sjamaar and E.~Lerman.
\newblock Stratified symplectic spaces and reduction.
\newblock {\em Annals of mathematics}, 134:375--422, 1991.

\bibitem{smale}
S.~Smale.
\newblock Topology and mechanics {I}.
\newblock {\em Inventiones mathematica}, 10:305--331, 1970.

\bibitem{sniatycki13}
J.~\'Sniatycki.
\newblock {\em Differential geometry of singular spaces and reduction of
  symmetries}.
\newblock Cambridge University Press, Cambridge, 2013.

\end{thebibliography}

\vspace{20pt}

\vspace{20pt}

\noindent Larry M. Bates \newline
Department of Mathematics \newline
University of Calgary \newline
Calgary, Alberta \newline
Canada T2N 1N4 \newline
bates@ucalgary.ca

\vspace{20pt}

\noindent J\k{e}drzej \'Sniatycki \newline
Department of Mathematics \newline
University of Calgary \newline
Calgary, Alberta \newline
Canada T2N 1N4 \newline
sniatyck@ucalgary.ca
\end{document}